\newtheorem{theorem}{Theorem}
\newtheorem{proposition}[theorem]{Proposition}
\newtheorem*{TheoremA}{Theorem A}
\newcommand\M{\mathcal{M}}
\newcommand\R{\mathcal{R}}
\begin{document}

\title[Boundedness of maximal functions]{Boundedness of maximal functions on non-doubling manifolds with ends}

\author{Xuan Thinh  Duong}

\address{Xuan  Thinh  Duong,
Department of
Mathematics,     Macquarie
University, N.S.W. 2109 Australia}
\email{xuan.duong@mq.edu.au}

\author{Ji Li}
\address{Ji Li, Department of Mathematics, Sun Yat-sen University,
Guangzhou, 510275, China}
\email{liji6@mail.sysu.edu.cn}

\author{Adam Sikora}
\address{Adam Sikora, Department of
Mathematics,     Macquarie
University, N.S.W. 2109 Australia}
\email{adam.sikora@mq.edu.au}

\thanks{This work was started during the second named author's
stay at Macquarie University.  J. Li was  supported by a scholarship from
Macquarie University during 2008-2009, and is supported by China Postdoctoral Science Foundation funded project (Grant No. 201104383) and the
Fundamental Research Funds for the Central Universities (No. 11lgpy56).}
\thanks{{\it {\rm 2010} Mathematics Subject
Classification:}  Primary 42B15;  Secondary 35P99}


\keywords{}
\begin{abstract}
Let $M$ be a manifold with ends constructed in \cite{GS} and $\Delta$ be the Laplace-Beltrami operator on $M$.
In this note, we  show the weak type $(1,1)$  and $L^p$ boundedness of the Hardy-Littlewood maximal
function  and of the maximal function associated with the heat semigroup $\M_\Delta f(x)=\sup_{t> 0} |\exp (-t\Delta)f(x)| $
on $L^p(M)$  for $1 < p \le \infty$. The significance of these results comes from the fact that $M$ does not satisfies the doubling condition.
\end{abstract}
\maketitle{}

\section{Introduction}
The theory of Calder\'on-Zygmund operators has played a crucial  role in harmonic analysis and its wide applications
in the last half a century or so. We refer readers to the excellent book \cite {S} and the references therein. In the
standard Calder\'on-Zygmund theory, an essential feature is the so-called doubling condition.
Let us recall that a metric space $(X, d, \mu )$ equipped with a metric $d$ and a measure $\mu$
satisfies the doubling condition if there exists a constant $C$ such that
 $$
 \mu (B(x,2r)) \le C  \mu (B(x,r))
 $$
for all $x\in X$ and $r>0$.

\medskip

Many metric spaces in classical analysis satisfy the doubling condition such as the Euclidean spaces and their smooth domains
(with Lebesgue measure), Lie groups and manifolds of polynomial growth. However, there are significant applications for which underlying ambient spaces do not satisfy the doubling
condition, for example domains of Euclidean spaces with rough boundaries, Lie groups and manifolds with
exponential growth. To these non-doubling spaces, the standard Calder\'on-Zygmund theory
established in the 70's and 80's  is not applicable.

\medskip

Recent works of Nazarov, Treil, Volberg, Tolsa and others, see for example \cite{NTV1, NTV2, NTV3, MMNO, T1, T2} show that
a large part of the standard Calder\"on-Zygmund
theory can be adapted to the case of non-doubling spaces which satisfy a mild growth condition. In
 \cite{DM}, Duong and A. McIntosh also obtain  estimates for certain singular integrals
acting on some domains which do not necessarily  satisfy the doubling condition.
However,  the theory of singular integrals  on
non-doubling spaces  is far from being complete and there are still many significant open problems in this topic.

\medskip

In this note, we study the boundedness of certain maximal functions on non-doubling manifolds with ends.
More specifically, we will show the weak type $(1,1)$  of the Hardy-Littlewood maximal
function and  the maximal function  associated with the heat semigroup of the
Laplace-Beltrami operator as well as  $L^p$ boundedness for these maximal operators
 for $1 < p \le \infty$.
Let us recall that the maximal function associated with the heat semigroup
is defined by the following formula
\begin{equation}\label{mdelta}
\M_\Delta f(x)=\sup_{t> 0} |\exp (-t\Delta)f(x)|
\end{equation}
for $f \in L^p(M)$, $1 \le p \le \infty$.
The behaviour of the kernels of the semigroup $\exp (-t\Delta)$ on manifolds with ends was
studied in \cite{GS}. For the convenience of reader, we recall the main  result of \cite{GS} in the next section as it
plays a key role in our estimates of the operator  $\M_\Delta$.

\section{Manifolds with ends}


Let $M$ be a complete non-compact Riemannian manifold. Let $K\subset M$ be a compact set with non-empty interior and smooth boundary such that $M\backslash K$ has $k$ connected components $E_1,\ldots,E_k$ and each $E_i$ is non-compact. We say in such a case that $M$ has $k$ ends with respect to $K$ and refer to $K$ as the central part of $M$. In many cases, each $E_i$ is isometric to the exterior of a compact set in another manifold $M_i$. In such case, we write $M=M_1\sharp M_2\sharp\cdots\sharp M_k$ and refer to $M$ as a connected sum of the manifolds $M_i$, $i = 1, 2, \cdots, k$.

Following \cite{GS} we consider the following model case. Fix a large integer $N$ (which will be the topological dimension of $M$) and, for any integer $m\in[2,N]$, define the manifold $\R^m$ by
$$
 \R^m=\mathbb{R}^m\times \mathbb{S}^{N-m}.
$$
The manifold $\R^m$ has topological dimension $N$ but its ``dimension at infinity'' is $m$ in the sense that $V(x,r)\approx r^m$  for $r\geq1$, see \cite[(1.3)]{GS}. Thus, for different values of $m$, the manifold $\R^m$ have different dimension at infinity but the same topological dimension $N$, This enables us to consider finite connected sums of the $\R^m$'s.

Fix $N$ and $k$ integers $N_1, N_2,\ldots,N_k\in [2,N]$ such that
$$N=\max\{N_1, N_2,\ldots,N_k\}.$$
Next consider the manifold
$$
    M=\R^{N_1}\sharp\R^{N_2}\sharp\cdots\sharp\R^{N_k}.
$$
In \cite{GS} GrigorÕyan and Saloff-Coste establish both the global upper bound and lower bound for the heat kernel acting on this model class. 
Now we recall the first part of their results with the hypothesis that
$$
    n:=\min_{1\leq i\leq k} N_i >2.
$$
Let $K$ be the central part of $M$ and $E_1,\ldots,E_k$ be the ends of $M$ so that $E_i$ is isometric to the complement of a compact set in $\R^{N_i}$. Write $E_i=\R^{N_i}\backslash K$. Thus, $x\in \R^{N_i}\backslash K$ means that the point $x\in M$ belongs to the end associated with $\R^{N_i}$. For any $x\in M$, define
$$
   |x|:=\sup_{z\in K}d(x,z),
$$
where $d=d(x,y)$ is the geodesic distance in $M$.
One can see that $|x|$ is separated from zero on $M$ and $|x|\approx 1+d(x,K)$.

For $x\in M$, let
$$
   B(x,r):=\{y\in M: d(x,y)<r\}
$$
be the geodesic ball with center $x\in M$ and radius $r>0$ and let
$
   V(x,r)=\mu(B(x,r))
$
where $\mu$ is a Riemannian measure on $M$.

Throughout the paper, we take the simple case $k=2$ for the model of metric spaces with non-doubling measure, i.e., we set $M=\R^n\sharp\R^m$ with $2<n < m$.
Then, from the construction of the manifold $M$, we can see that
\begin{itemize}
\item[(a)] $V(x,r)\thickapprox r^m$ for all $x\in M$, when $r\leq 1$;
\item[(b)] $V(x,r)\thickapprox r^n$ for $B(x,r)\subset \R^n$, when $r> 1$; and
\item[(b)] $V(x,r)\thickapprox r^m$ for $x\in \R^n\backslash K$, $r>2|x|$, or $x\in \R^m$, $r>1$.
\end{itemize}
It is not difficult to check that $M$ does not satisfy the doubling condition.  Indeed, consider a
sequence of balls $B(x_k,r_k)\subset \R^n$ such that $r_k = |x_k| > 1$ and $r_k \rightarrow \infty$ as $k \rightarrow \infty$.
Then $V(x_k,r_k)\thickapprox (r_k)^n$. However, $V(x_k,2r_k)\thickapprox (r_k)^m$ and the doubling condition fails.

Let $\Delta$  be the Laplace-Beltrami operator on $M$ and $e^{-t\Delta}$ the heat semi-group generated by $\Delta$. We denote by $p_t(x,y)$ the heat kernel associated to $e^{-t\Delta}$.

We recall here the following theorem which is  the main results obtain~in~\cite{GS}.
\begin{TheoremA}\cite{GS} Let $M=\R^m\sharp \R^n$ with $2<n < m$. Then the heat kernel $p_t(x,y)$ satisfies the following estimates.

\noindent 1. For $t\leq 1$ and all $x,y\in M$,
\begin{eqnarray*}
p_t(x,y)\approx {C \over V(x,\sqrt{t})}\exp\Big( -c {d(x,y)^2\over t} \Big).
\end{eqnarray*}
2. For $x,y\in K$ and all $t>1$,
\begin{eqnarray*}
p_t(x,y)\approx {C \over t^{n/2}}\exp\Big( -c {d(x,y)^2\over t} \Big).
\end{eqnarray*}
3. For $x\in \R^m\backslash K$, $y\in K$ and all $t>1$,
\begin{eqnarray*}
p_t(x,y)\approx C \Big({1 \over t^{n/2} |x|^{m-2}  }+ {1\over t^{m/2}}\Big)\exp\Big( -c {d(x,y)^2\over t} \Big).
\end{eqnarray*}
4. For $x\in \R^n\backslash K$, $y\in K$ and all $t>1$,
\begin{eqnarray*}
p_t(x,y)\approx C \Big({1 \over t^{n/2} |x|^{n-2}  }+ {1\over t^{n/2}}\Big)\exp\Big( -c {d(x,y)^2\over t} \Big).
\end{eqnarray*}
5. For $x\in \R^m\backslash K$, $y\in \R^n\backslash K$ and all $t>1$,
\begin{eqnarray*}
p_t(x,y)\approx C \Big( {1\over t^{n/2}|x|^{m-2}} + {1\over t^{m/2}|y|^{n-2}} \Big)\exp\Big( -c {d(x,y)^2\over t} \Big)
\end{eqnarray*}
6. For $x,y\in \R^m\backslash K$ and all $t>1$,
\begin{eqnarray*}
p_t(x,y)\approx  {Ct^{-n/2}\over |x|^{m-2}|y|^{m-2}}\exp\Big( -c {|x|^2+|y|^2\over t} \Big) +{C\over t^{m/2}}\exp\Big( -c {d(x,y)^2\over t} \Big)
\end{eqnarray*}
7. For $x,y\in \R^n\backslash K$ and all $t>1$,
\begin{eqnarray*}
p_t(x,y)\approx  {Ct^{-n/2}\over |x|^{n-2}|y|^{n-2}}\exp\Big( -c {|x|^2+|y|^2\over t} \Big) +{C\over t^{n/2}}\exp\Big( -c {d(x,y)^2\over t} \Big).
\end{eqnarray*}

\end{TheoremA}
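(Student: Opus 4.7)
The plan is to separate the analysis into the short-time regime $t\le 1$, where the local Euclidean structure dominates, and the long-time regime $t>1$, where the global two-ended geometry becomes visible. For $t\le 1$, since $M$ is a smooth complete Riemannian manifold of bounded geometry, a local parabolic Harnack inequality (Saloff-Coste) is available at scales $\sqrt t\le 1$; combined with the standard Li-Yau chaining argument, this yields the Gaussian two-sided bound with local volume $V(x,\sqrt t)$, giving case~1.

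For $t>1$, I would analyse the Brownian motion on $M$ according to how it interacts with the central part $K$. The key observation is that each end $E_i$, viewed as a subset of the full model manifold $\R^{N_i}$, satisfies volume doubling and a scale-invariant Poincar\'e inequality; consequently, the heat kernel on $\R^{N_i}$ alone obeys standard two-sided Gaussian bounds with volume growth $r^{N_i}$. One then relates the heat kernel of $M$ to those on the individual pieces through a Dirichlet decomposition of the form
\begin{equation*}
 p_t(x,y)=p^D_t(x,y)+\mathbb{E}_x\bigl[\mathbf{1}_{\tau_K<t}\,p_{t-\tau_K}(X_{\tau_K},y)\bigr],
\end{equation*}
where $p^D$ is the Dirichlet heat kernel on the relevant end and $\tau_K$ is the first hitting time of $K$. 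The term $p^D_t$ is controlled by the heat kernel on $\R^{N_i}$, while the second term encodes propagation through $K$ and, when $y$ lies in the opposite end, the inter-end communication.

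The harmonic prefactors $|x|^{-(m-2)}$ and $|y|^{-(n-2)}$ appearing in cases 3--7 come from the minimal positive harmonic functions on $\R^m\setminus K$ and $\R^n\setminus K$ respectively: because $n,m>2$ each end is transient, and its Green's function decays like $|x|^{-(N_i-2)}$. Via a Doob $h$-transform one can rewrite the Dirichlet semigroup on an end as a semigroup on a doubling space, apply the standard Gaussian bounds to the transformed process, and then undo the transform to read off the harmonic weights. This explains the structure of case~5, where the cross-end contribution decomposes as a product of a heat kernel factor of dimension $n$ (the smaller one) and the hitting profiles $|x|^{-(m-2)}$ and $|y|^{-(n-2)}$.

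The main obstacle I expect is the matching lower bound, and in particular the precise scaling of the first-passage density of $K$ in terms of $t$, $|x|$ and $|y|$: this requires capacity estimates that correctly mix the two dimensions $n$ and $m$, together with a careful parabolic maximum principle (Aronson-type) argument to transfer the Gaussian decay $\exp(-c|x|^2/t)$ from the exit point on $\partial K$ back to $x$. The additive structure of cases 3--7 (rather than a single product) reflects the competition between direct diffusion inside a single end and diffusion through $K$, and pinning down which mechanism dominates in which regime is the most delicate technical point.
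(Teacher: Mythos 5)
The first thing to say is that the paper does not prove Theorem A at all: it is quoted verbatim from Grigor'yan and Saloff-Coste \cite{GS} and used as a black box, so there is no in-paper argument to compare yours against. That said, your outline does correctly identify the strategy that \cite{GS} actually follows: local parabolic Harnack plus chaining for $t\le 1$; for $t>1$, the decomposition $p_t=p^D_t+(\text{hitting term})$ through the central part $K$; two-sided Gaussian bounds for the Dirichlet heat kernels on the ends, obtained from the doubling and Poincar\'e structure of the full model manifolds $\R^{N_i}=\mathbb{R}^{N_i}\times\mathbb{S}^{N-N_i}$; and the harmonic profiles $|x|^{2-N_i}$ arising from transience (the hitting probability of $K$), with the additive form of cases 3--7 reflecting the competition between direct diffusion within one end and passage through $K$. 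As a description of the mechanism behind each term in the statement, this is accurate.

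However, as a proof it is only a roadmap, and the gap is that essentially all of the quantitative work is deferred. You do not establish the two-sided estimate on the time--space density of the first hit of $\partial K$, which must simultaneously carry the amplitude $|x|^{2-m}$ and the Gaussian factor $\exp(-c|x|^2/t)$, and whose matching \emph{lower} bound (which you yourself flag) is the hardest part of \cite{GS}. You do not carry out the Doob $h$-transform computation, nor verify that the transformed end is doubling with the exponent that produces $t^{-n/2}$ rather than $t^{-m/2}$ in the cross terms; this is exactly what distinguishes the two summands $t^{-n/2}|x|^{-(m-2)}$ and $t^{-m/2}|y|^{-(n-2)}$ in case 5, and what produces the different Gaussian arguments $|x|^2+|y|^2$ versus $d(x,y)^2$ in cases 6--7 (a path returning to the same end via $K$ must travel distance $\gtrsim|x|+|y|$, not merely $d(x,y)$ --- this distinction is nowhere justified in your sketch). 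These steps occupy most of the eighty-page paper \cite{GS}. For the purposes of the present paper the correct move is simply to cite \cite{GS}, as the authors do; if you intend to actually reprove Theorem A, the items above are where the work lies.
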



\section{The boundedness of Hardy-Littlewood maximal function}\label{sec3}
In this section we consider $M=\R^m\sharp\R^n$ for $m > n>2$. A main difficulty which we encounter in
our study is that the doubling condition fails
 in this setting.  However, local doubling still holds, i.e. the doubling condition holds for a ball $B(x,r)$
under the additional assumption $r\le 1$.

\medskip

Let us recall next the standard definition of uncentered Hardy--Littlewood Maximal function.
For any  $p \in [1,\infty]$ and any function $f\in L^p$
let
$$
\M f(x) =\sup_{y\in M,\ r>0} \left\{ \frac{1}{V(y,r)}\int_{B(y,r)} | f(z) | dz\colon x\in B(y,r) \right\}.
$$
Also we have the centered Hardy--Littlewood Maximal function.
For any  $p \in [1,\infty]$ and any function $f\in L^p$
we set
$$
\M_c f(x) =\sup_{r>0} \frac{1}{V(x,r)}\int_{B(x,r)} | f(z) | dz.
$$
It is straightforward to see that $\M_c f(x) \le \M f(x)$ for all $x$. Moreover
in the doubling setting
\begin{equation}\label{cen}
\M(f) \le C\M_c(f),\end{equation} where $C$ is the same constant as in
the doubling condition.
 However, we point out that  estimate \eqref{cen} does not hold in the setting $M=\R^m\sharp\R^n$ with $m > n>2$.
More specifically, one has the following proposition.
\begin{proposition}\label{cenf}
In the setting $M=\R^m\sharp\R^n$ with $m > n>2$, the
estimate $\M(f)  \le C \M_c(f)$ fails for any constant $C$.
\end{proposition}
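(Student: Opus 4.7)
The plan is to construct a counterexample that exploits the same failure of doubling pointed out earlier in the section: for balls $B(y,r)\subset \R^n$ with $r=|y|$ large one has $V(y,r)\approx r^n$ while $V(y,2r)\approx r^m$, a gap of order $r^{m-n}$. I will use this gap to produce a function $f_k$ and a base point $x_0$ for which $\M f_k(x_0)/\M_c f_k(x_0)$ is of order $a_k^{m-n}\to\infty$, which precludes any finite constant $C$.

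The construction is as follows: fix $x_0\in K$, pick a sequence $y_k\in \R^n\setminus K$ with $|y_k|=a_k\to\infty$, and set $f_k=\mathbf{1}_{B(y_k,1)}$ (so $\int f_k\approx 1$ by property (a)). The geometric picture is that any ball \emph{centered} at $x_0$ large enough to reach $B(y_k,1)$ must swell across $K$ into the much larger $\R^m$ end, while an \emph{uncentered} ball placed on the $\R^n$ side can cover both $x_0$ and $B(y_k,1)$ while remaining essentially inside the thinner $\R^n$ end. Quantifying this gives the claimed gap.

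To bound $\M_c f_k(x_0)$ from above, I would argue that any ball $B(x_0,R)$ meeting $B(y_k,1)$ satisfies $R\gtrsim a_k$, and for such $R$ the ball contains a large piece of the $\R^m$ end so $V(x_0,R)\gtrsim R^m\gtrsim a_k^m$; together with $\int_{B(x_0,R)}f_k\le 1$ this yields
\[
\M_c f_k(x_0)\lesssim a_k^{-m}.
\]
To bound $\M f_k(x_0)$ from below, I would pick $z_k$ on a minimizing geodesic from $x_0$ to $y_k$ with $d(z_k,x_0)\approx d(z_k,y_k)\approx a_k/2$ (so $z_k\in \R^n\setminus K$ and $|z_k|\approx a_k/2$) and set $R_k=a_k/2+2$; then $B(z_k,R_k)\supset\{x_0\}\cup B(y_k,1)$, and because $R_k<2|z_k|$ property (b) (second bullet) together with the fact that the intersections of $B(z_k,R_k)$ with $K$ and with $\R^m\setminus K$ contribute only $O(1)$ to the volume gives $V(z_k,R_k)\approx a_k^n$. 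Hence $\M f_k(x_0)\gtrsim a_k^{-n}$, and dividing yields $\M f_k(x_0)/\M_c f_k(x_0)\gtrsim a_k^{m-n}$, which tends to infinity.

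The main technical point, and essentially the only nontrivial step, is justifying $V(z_k,R_k)\approx a_k^n$: the ball $B(z_k,R_k)$ is not strictly contained in $\R^n$ because it just grazes $K$, but its intersection with $K$ has bounded volume and its intersection with $\R^m\setminus K$ consists only of points within $O(1)$ of $K$ (since $d(z_k,\R^m\setminus K)\ge |z_k|-O(1)\approx a_k/2$), so both corrections are bounded and the $R_k^n$ contribution from the $\R^n$ piece dominates. Once this volume comparison is in hand, the conclusion follows directly from the asymptotics (a)--(b) listed in Section~2.
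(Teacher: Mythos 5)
Your construction is correct: the volume gap $V\approx r^n$ versus $V\approx r^m$ is exactly the mechanism at work, and your two bounds $\M_c f_k(x_0)\lesssim a_k^{-m}$ (any centered ball at $x_0\in K$ reaching $B(y_k,1)$ has radius $\gtrsim a_k$ and hence swallows $\gtrsim a_k^m$ of volume from the $\R^m$ end) and $\M f_k(x_0)\gtrsim a_k^{-n}$ (via the uncentered ball $B(z_k,R_k)$ riding inside the $\R^n$ end) are both justified; in particular your care with $V(z_k,R_k)\approx a_k^n$ — bounding the contributions of $K$ and of the $O(1)$-collar in the $\R^m$ end — is the right technical point to isolate. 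The paper proves the same proposition by a dual arrangement: it fixes the single test function $f=\chi_{\R^n\setminus K}$ and sends the evaluation point $x$ to infinity in the $\R^m$ end, showing $\M f(x)=1$ there (by taking uncentered balls lying mostly in $\R^n$ that barely contain $x$) while $\M_c f(x)\approx \sup_{r>|x|}(r-|x|)^n/r^m\to 0$ as $|x|\to\infty$. So the paper gets the failure from one bounded function at a sequence of points, whereas you get it from a sequence of $L^1$-normalized bumps at one fixed point; the paper's version is slightly slicker (no volume estimate for a grazing ball is needed, since the average of $\chi_2$ over a ball mostly in $\R^n$ is trivially close to $1$), while yours localizes the phenomenon and makes the quantitative rate $a_k^{m-n}$ of the failure explicit. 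One small point worth adding in either approach: since the asserted inequality is pointwise (a.e.), you should observe that your estimates persist for all $x$ in a fixed neighborhood of $x_0$ (they do, by the same argument), so the failure occurs on a set of positive measure rather than at a single point.
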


\begin{proof}
Denote  the characteristic functions of the sets $\R^m\backslash K$, $\R^n\backslash K$ and $K$
by $\chi_1$, $\chi_2$ and $\chi_3$, respectively. Let $f=\chi_2$. Then for any fixed $x\in \R^m$, we
first note that
$${1\over V(B)}\int_B \chi_2(y)dy \le 1$$
for any $B\ni x$. Furthermore, we can construct balls $B\ni x$ such that the ball $B$ with centre $z$, radius $r$, lying mostly in $\R^n$
by choosing $z \in \R^n$, $r$ large enough and $d(z,x) = r-\epsilon$ for $\epsilon$ sufficiently small.
This implies that
$$
  \M(f)(x)=\sup_{B\ni x}{1\over V(B)}\int_B \chi_2(y)dy=1 .
$$
Now consider the centered Hardy--Littlewood Maximal function $\M_c(f)$.
By the definition  for any $r>0$,
$$
    \frac{1}{V(x,r)}\int_{B(x,r)}f(z)dz=    \frac{C}{r^m}\int_{B(x,r)\cap (\R^n\backslash K) }dz.
$$
This implies that $r> |x|$ and the term $\displaystyle {\frac{C}{r^m}\int_{B(x,r)\cap (\R^n\backslash K) }dz }$ is comparable to
$\displaystyle{    {(r-|x|)^n \over r^m}}.$

It is easy to check  that the maximal value of the above term is comparable to
$$
     \big({n|x|\over m-n}\big)^n  \big\slash \big({m|x|\over m-n}\big)^m,
$$
which shows that $\M(f)$ is not pointwise bounded by any multiple of $\M_c(f)$ since the maximal value depends on $x$ and tends to zero when $|x|$ goes~to~$\infty$. This proves Proposition~\ref{cenf}.
\end{proof}

\begin{theorem}\label{max}
The maximal function $\M(f)$ is of weak type $(1,1)$ and bounded on all $L^p$ spaces for $1< p \le \infty$.
\end{theorem}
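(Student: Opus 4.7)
The plan is to prove the weak type $(1,1)$ bound for $\M$; $L^p$-boundedness for $1<p\le\infty$ then follows by Marcinkiewicz interpolation with the trivial estimate $\|\M f\|_\infty\le\|f\|_\infty$. The strategy for the weak $(1,1)$ inequality is to partition the balls $B=B(y,r)$ appearing in the supremum defining $\M f(x)$ into three families, in each of which a doubling-type inequality (possibly after passing to an auxiliary doubling space) is available, and then to run a Vitali-type covering argument in each family separately.

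The first family consists of the small balls, $r\le 1$; assumption (a) in Section~2 yields $V(B(y,r))\approx r^m\approx V(B(y,2r))$ uniformly in $y$, so the standard Vitali argument gives the weak $(1,1)$ bound. The second, ``large doubling'', family collects the large balls for which $V(B(y,r))\approx V(B(y,2r))$ thanks to (b)-(c): this is the case when $y\in K\cup(\R^m\setminus K)$ and $r>1$, when $y\in\R^n\setminus K$ with $r\ge 2|y|$ (where $V(B)\approx r^m$), and when $y\in\R^n\setminus K$ with $r\le |y|/2$ (where $B\subset\R^n\setminus K$, $V(B)\approx r^n$, and doubling in $r$ holds with constant $\approx 2^n$). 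Vitali again furnishes the weak $(1,1)$ bound for this family.

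The third, ``large non-doubling'', family is $y\in\R^n\setminus K$, $r>1$, $|y|/2<r<2|y|$; this is the only regime in which doubling at $y$ genuinely fails, for $V(B(y,r))$ can be as small as $r^n$ while $V(B(y,2r))$ can be as large as $r^m$. Here the crucial observation is that such a ball $B$ is, up to a bounded piece in $K$ and a spill-over into $\R^m\setminus K$ of volume $\lesssim (r-|y|)_+^m$, a ball of the end $\R^n\setminus K$; endowed with the induced metric and Riemannian measure this end is Ahlfors $n$-regular and hence doubling with constant $\approx 2^n$. The plan is to run the Vitali selection for this family inside the auxiliary doubling space $\R^n\setminus K$, and then to absorb the cross-contributions $\int_{B\cap K}|f|$ and $\int_{B\cap(\R^m\setminus K)}|f|$ using the bounds $V(B\cap K)\le V(K)\lesssim 1$ and $V(B\cap(\R^m\setminus K))\lesssim(r-|y|)_+^m$ together with $V(B)\gtrsim r^n$.

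The main obstacle is precisely this third family: if one simply applied the Vitali argument inside $M$, enlarging a non-doubling ball by the standard factor $5$ would push it into the doubling regime, causing its volume to jump from $\approx r^n$ to $\approx r^m$ and losing a factor $r^{m-n}$ that would only yield weak type $(1,n/m)$ instead of $(1,1)$. Performing the selection in the auxiliary $n$-dimensional doubling space $\R^n\setminus K$ avoids this loss, but one must then verify carefully that the union of the corresponding enlarged $M$-balls still has $M$-measure $\lesssim\|f\|_1/\lambda$; this is where the explicit volume estimates (b) and (c) and the characterisation of the non-doubling regime $|y|/2<r<2|y|$ play the decisive role.
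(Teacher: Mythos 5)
Your organisation by families of balls is a genuinely different (and reasonable) architecture from the paper's, which instead splits $f=\chi_1f+\chi_2f+\chi_3f$ by region and estimates each piece on each end; and you have correctly isolated the dangerous regime $y\in\R^n\backslash K$, $r\approx|y|$. The problem is that your treatment of that regime---the whole content of the theorem---is left as a plan, and the one mechanism you do specify is wrong as stated: knowing $V(B\cap K)\lesssim 1$ and $V(B\cap(\R^m\backslash K))\lesssim (r-|y|)_+^m$ gives no control whatsoever on $\int_{B\cap K}|f|$ or $\int_{B\cap(\R^m\backslash K)}|f|$ unless $f$ is bounded. What actually absorbs the cross-contributions is the crude bound $\int_{B\cap(K\cup(\R^m\backslash K))}|f|\le\|f\|_1$ together with $V(B)\gtrsim r^n\gtrsim|x|^n$ for $x\in B\cap(\R^n\backslash K)$, which yields the pointwise estimate $\sup_B \frac{1}{V(B)}\int_{B}|\chi_{K\cup(\R^m\backslash K)}f|\lesssim \|f\|_1/|x|^n$ on the end $\R^n\backslash K$; the step that then closes the argument---entirely absent from your proposal---is that the sublevel set $\{x\in\R^n\backslash K:\ |x|^n<C\|f\|_1/\alpha\}$ has measure $\lesssim\|f\|_1/\alpha$ precisely because that end is $n$-dimensional at large scales. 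This dimensional matching is the heart of the paper's estimates of $I_2$, $I\!I_1$ and $I\!I\!I_2$, and without it your ``absorption'' does not produce a weak $(1,1)$ bound.

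Two further gaps. First, if you run the Vitali selection inside the auxiliary space $\R^n\backslash K$, the selected balls are disjoint only in that end; summing $\int_{B_i}|f|$ over the full $M$-balls then over-counts the mass of $f$ sitting in $K\cup(\R^m\backslash K)$ (many disjoint pieces $B_i\cap(\R^n\backslash K)$ can come from balls that all contain $K$), so you must first split $f$ by region and reserve the covering argument for $\chi_2 f$ alone---at which point you have reproduced the paper's decomposition. Second, your ``large doubling'' family is mis-drawn: for $y\in\R^n\backslash K$ and $2|y|/5<r\le|y|/2$ one has $V(y,2r)\approx V(y,r)$, but the Vitali argument needs the $5r$-enlargement, and $V(y,5r)\approx r^m\gg r^n\approx V(y,r)$ there; these balls must be moved into the non-doubling family (e.g.\ take it to be $|y|/10<r<2|y|$). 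None of this is unfixable, and once repaired your argument converges to essentially the same estimates as the paper's, but as written the critical case is not proved.
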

\begin{proof} Here and throughout the paper, for the sake of simplicity we use $|\cdot|$ to denote the measure of the sets in $M$.
It is straightforward that the maximal function $\M(f)$ is bounded on $L^\infty$. We will show that the weak type $(1,1)$
estimate
$$
|\{x \colon \M f(x) > \alpha \}| \le C \frac{\|f\|_1}{\alpha}
$$
holds, then the $L^p$ boundedness of $\M(f)$ follows from the Marcinkiewicz interpolation theorem.


We consider two cases:

\medskip
\noindent
{\bf Case 1:}
$\frac{\|f\|_1}{\alpha} < 1$.
\medskip

\noindent
Following the standard proof of weak type for Maximal operator we note that for any
$x\in \{x \colon \M f(x) > \alpha \}$ there exist a ball such that $x\in B(y,r)$ and
\begin{eqnarray}\label{w11}
 \frac{1}{V(y,r)}\int_{B(y,r)} |f(z)| dz > \alpha.
\end{eqnarray}
This implies $$\| f \| _1 = \int _M |f(z)| dz \ge \int_{B(y,r)} |f(z)| dz > \alpha V(y,r) .$$
Therefore  $1>\frac{\|f\|_1}{\alpha}>{V(y,r)}$, hence $r \le 1$ and the ball $B(y,r)$ satisfies doubling condition so
one can use standard Vitali covering argument to prove weak type $(1,1)$ estimate in this case.

\medskip
\noindent
{\bf Case 2 :} $\frac{\|f\|_1}{\alpha} \ge 1$.

\medskip
\noindent
First we split  $M$ into three components $\R^m\backslash K$, $\R^n\backslash K$ and $K$, and denote their characteristic functions by
$\chi_1$, $\chi_2$ and $\chi_3$, respectively.
Since the maximal function $\M(f)$ is sublinear, it is enough to show
 that
each of the three terms $\M( \chi_1 f)$, $\M( \chi_2 f)$ and $\M( \chi_3 f)$ is of weak type $(1,1)$.

\medskip

We first consider $\M( \chi_1 f)$. Then
\begin{align*}
&|\{x: \M (\chi_1f)(x) > \alpha \}|
 \leq |\{x \in \R^m\backslash K: \M (\chi_1f)(x) > \alpha \}| \\
&\hskip.5cm +  |\{x \in \R^n\backslash K: \M (\chi_1f)(x) > \alpha \}|+ |\{x \in K: \M (\chi_1f)(x) > \alpha \}| \\
&\hskip.5cm =: I_1 +I_2 +I_3.
\end{align*}

The estimate for  $I_1$ follows from the classical weak type $(1,1)$ estimate since $\chi_1f$ is a function on $\R^m\backslash K$ and the measure on $\R^m\backslash K$ satisfies
the doubling condition.

To estimate  $I_2$,  we note that for all $ x\in \R^n\backslash K$,
$$\sup  \left\{\frac{1}{|B(y,r)|}\colon r>d(x,y)\quad \mbox{and}\quad B(y,r)\cap (\R^m\backslash K) \neq \emptyset  \right\} \le C\frac{1}{|x|^n}.
$$
The above inequality implies that
\begin{equation}\label{aa}
\M \chi_1f(x) \le C\frac{\|\chi_1f\|_1}{|x|^n} \quad \forall x\in \R^n\backslash K.
\end{equation}
Hence
$$I_2 \leq  |\{x \in \R^n\backslash K: C\frac{\|\chi_1f\|_1}{|x|^n} > \alpha \}| \leq C \frac{\|\chi_1f\|_1}{\alpha}\leq  C \frac{\|f\|_1}{\alpha}.$$

To estimate $I_3$, we note that the measure of $K$ is finite. Therefore
$$
|I_3|\leq |K|\leq C \frac{\|f\|_1}{\alpha}.
$$

To prove the weak $(1,1)$ estimate of $\M( \chi_2 f)$ we note that
\begin{align*}
&|\{x: \M (\chi_2f)(x) > \alpha \}|\leq |\{x \in \R^m\backslash K: \M (\chi_2f)(x) > \alpha \}|\\
&\hskip.5cm  +  |\{x \in \R^n\backslash K: \M (\chi_2f)(x) > \alpha \}| + |\{x \in K: \M (\chi_2f)(x) > \alpha \}| \\
&\hskip.5cm =: I\!I_1 +I\!I_2 +I\!I_3.
\end{align*}
 $I\!I_2$ and $I\!I_3$ can be verified following the same steps as for  $I_1$ and $I_3$, respectively. To estimates $I\!I_1$ we observe that
\begin{equation}\label{bb}
\quad \M \chi_2f(x) \le C\frac{\|\chi_2f\|_1}{|x|^m} \quad \forall x\in \R^m\backslash K.
\end{equation}
Hence $I\!I_2\leq C \frac{\|f\|_1}{\alpha}.$

Similarly, to deal with  $\M( \chi_3 f)$ we note that
\begin{align*}
&|\{x: \M (\chi_3f)(x) > \alpha \}|\leq |\{x \in \R^m\backslash K: \M (\chi_3f)(x) > \alpha \}| \\
&\hskip.5cm +  |\{x \in \R^n\backslash K: \M (\chi_3f)(x) > \alpha \}| + |\{x \in K: \M (\chi_3f)(x) > \alpha \}| \\
&\hskip.5cm =: I\!I\!I_1 +I\!I\!I_2 +I\!I\!I_3.
\end{align*}
The estimate of $I\!I\!I_1$ follows immediately since the measure on $(\R^m\backslash K) \cup K$ satisfies the  doubling condition.
The estimate of $I\!I\!I_3$ is the same as that of $I_3$ or $I\!I_3$. Next to estimates $I\!I\!I_2$ we further decompose $\{x \in \R^n\backslash K\}$ into two parts $\{x \in \R^n\backslash K: |x|\leq 2\}$ and $\{x \in \R^n\backslash K: x>2\}$. For the first part we directly have
$$
|\{x \in \R^n\backslash K: |x|\leq 2,\ \M (\chi_3f)(x) > \alpha \}|\leq C \leq   C \frac{\|f\|_1}{\alpha}.
$$
For the second part, similar to the estimate of $I_2$, we note that for all $x\in \R^n\backslash K$ and $|x|>2$,
$$
\sup  \left\{\frac{1}{|B(y,r)|}\colon r>d(x,y) \quad \mbox{and}\quad B(y,r)\cap K \neq \emptyset  \right\} \le C\frac{1}{|x|^n}.
$$
Hence,
\begin{equation*}
\M \chi_3f(x) \le C\frac{\|\chi_3f\|_1}{|x|^n} \quad \forall x\in \R^n\backslash K\ \textup{and}\ |x|>2,
\end{equation*}
which implies that
$$
|\{x \in \R^n\backslash K: |x|> 2,\ \M (\chi_3f)(x) > \alpha \}|\leq   C \frac{\|f\|_1}{\alpha}.
$$

Combining the estimates of $\M( \chi_1 f)$, $\M( \chi_2 f)$ and $\M( \chi_3 f)$ we verify   (\ref{w11}).
The proof of Theorem \ref{max} is now complete.


\end{proof}

\section{The boundedness of the maximal function $\M_\Delta$}

In this section  we prove that the heat maximal operator satisfies weak type $(1,1)$  and  is bounded on $L^p$ for $1 < p \le \infty$.

\medskip

We note that when the heat semigroup has a Gaussian upper bound, then the maximal function corresponding
to heat semigroup  is pointwise dominated by the Hardy-Littlewood maximal operator. In this case, the
weak type $(1,1)$ estimate of $\M_\Delta $ follows from the weak type $(1,1)$ estimate of the Hardy-Littlewood
maximal function.
However, in considered  setting this is no longer the case and the operator
$\M_\Delta $  can not be controlled
by the Hardy-Littlewood maximal function. We can see this via the estimates of the heat semigroup  in the proof of
Theorem \ref{theorem2} below where we give a direct proof of the weak type estimates of the heat maximal operator.

\medskip

The following theorem is the main result of this section.
\begin{theorem}\label{theorem2}
Let $\M_\Delta$ be the operator defined by \eqref{mdelta}. Then $\M_\Delta$ is weak type $(1,1)$ and for any
function $f\in L^p$, $1<p\leq\infty$, the following estimates hold
$$
\|\M_\Delta f\|_{L^p(M)} \le C  \|f\|_{L^p(M)}.
$$
\end{theorem}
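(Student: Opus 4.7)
The $L^\infty$ bound is immediate from the sub-Markovian property of $e^{-t\Delta}$, which gives $\|\M_\Delta f\|_\infty \le \|f\|_\infty$. Once the weak type $(1,1)$ bound is established, the Marcinkiewicz interpolation theorem applied to the sublinear operator $\M_\Delta$ yields strong $L^p$ for $1 < p < \infty$. The main task is therefore the weak $(1,1)$ estimate.

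Split $\M_\Delta f = \max\{\M_\Delta^{\rm sh}f,\, \M_\Delta^{\rm lg}f\}$ with $\M_\Delta^{\rm sh}f(x) = \sup_{0 < t \le 1}|e^{-t\Delta}f(x)|$ and $\M_\Delta^{\rm lg}f(x) = \sup_{t > 1}|e^{-t\Delta}f(x)|$. For the short-time piece, part 1 of Theorem A gives the Gaussian bound $p_t(x,y) \le C V(x,\sqrt t)^{-1}\exp(-cd(x,y)^2/t)$ for $t \le 1$, and since balls of radius $\le 1$ are doubling, the classical dyadic annular decomposition yields $\M_\Delta^{\rm sh}f(x) \le C\M f(x)$; Theorem~\ref{max} then closes this part.

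For $\M_\Delta^{\rm lg}$, decompose $f = \chi_1 f + \chi_2 f + \chi_3 f$ and partition $x$ according to whether it lies in $\R^m\setminus K$, $\R^n\setminus K$, or $K$. Each of the nine sub-cases uses the corresponding part of Theorem A (2--7, with the symmetry $p_t(x,y) = p_t(y,x)$). In every case the kernel splits into a \emph{regular} term $t^{-\nu/2}e^{-cd(x,y)^2/t}$ and an \emph{anomalous} term carrying extra factors $|x|^{-\beta}$ and/or $|y|^{-\beta'}$. The regular terms are either comparable to $V(x,\sqrt t)^{-1}e^{-cd(x,y)^2/t}$ in the relevant regime (and so bounded by $C\M f(x)$), or they are integrated against $|f|$ over a set of finite measure, namely $K$, and absorbed via $\sup_{t>1} t^{-\nu/2} \le 1$. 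For the anomalous terms I would use three ingredients: the geodesic inequality $d(x,y) \gtrsim |x|$ whenever $y$ and $x$ lie in different ends or $y \in K$, which converts Gaussian decay in $d$ into $e^{-c|x|^2/t}$; the built-in decay $e^{-c(|x|^2+|y|^2)/t}$ already present in cases 6 and 7; and the elementary inequality $\sup_{t > 0}t^{-\nu/2}e^{-cR^2/t} \le CR^{-\nu}$. Applying these, and bounding the $y$-integral by $\|f\|_1$ (using $|y|^{-\beta'}\le 1$ on the ends), one obtains pointwise estimates of the form $\M_\Delta^{\rm lg}(\chi_j f)(x) \le C\|f\|_1\,|x|^{-d}$, where $d \in \{m,n\}$ is the dimension at infinity of the end containing $x$.

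The weak $(1,1)$ estimate then follows from direct level-set computations. Since $|x|\ge 1$ on the ends, the set $\{x\in\R^m\setminus K : C\|f\|_1/|x|^m > \alpha\}$ is empty when $\|f\|_1/\alpha$ is small, and otherwise has measure $\lesssim (\|f\|_1/\alpha)^{m/m}=\|f\|_1/\alpha$ by the volume growth $V(x,r)\approx r^m$ for $r>1$ on $\R^m$; the analogous estimate holds on $\R^n\setminus K$. For the compact central part $K$, the uniform bound $\M_\Delta^{\rm lg}f(x)\le C\|f\|_1$, a consequence of $\sup_{t>1,\,y}p_t(x,y)\le C$, forces $\{x\in K: \M_\Delta^{\rm lg}f(x) > \alpha\}$ to be empty unless $\|f\|_1/\alpha \gtrsim 1$, in which case $|K|\lesssim \|f\|_1/\alpha$. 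The main technical obstacle is case 6 (both $x,y\in\R^m\setminus K$), where the anomalous kernel $t^{-n/2}|x|^{-(m-2)}|y|^{-(m-2)}\exp(-c(|x|^2+|y|^2)/t)$ carries Gaussian decay in both $|x|$ and $|y|$ simultaneously; keeping track of both factors, the scheme above yields the sharper bound $\M_\Delta^{\rm lg}(\chi_1 f)(x)\le C\|f\|_1/|x|^{m+n-2}$, whose exponent exceeds the dimension at infinity $m$ and therefore more than suffices.
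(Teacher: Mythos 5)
Your proposal follows essentially the same route as the paper: decompose $f$ by ends, partition $x$ by ends, split $t\le 1$ from $t>1$, dominate the Gaussian/regular kernel pieces by a Hardy--Littlewood maximal function on a doubling piece, extract pointwise bounds $C\|f\|_1|x|^{-d}$ from the anomalous pieces via $\sup_{t}t^{-\nu/2}e^{-cR^2/t}\lesssim R^{-\nu}$ together with $d(x,y)\gtrsim |x|$, and finish with level-set estimates, the sub-Markovian $L^\infty$ bound and Marcinkiewicz interpolation. The one point to tighten is the long-time regular term when $x,y\in\R^n\setminus K$: there $t^{-n/2}$ is \emph{not} comparable to $V(x,\sqrt t)^{-1}$ once $\sqrt t\gg |x|$ (the ball in $M$ then has volume $\approx t^{m/2}$), so that term must be dominated by the maximal function of the end $\R^n$ viewed as a doubling manifold in its own right (as the paper does with $\mathcal{M}_{\R^n\backslash K}$), not by $\M$ on $M$.
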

\begin{proof}

We first show that $\M_\Delta$ is weak type $(1,1)$, i.e., we need to prove
that there exists a positive constant $C$ such that for any $f\in
L^1(M)$ and for any $\lambda>0$,
\begin{eqnarray}\label{weak 1-1}
\big|\big\{ x\in M: \sup\limits_{t>0} |\exp(-t\Delta)f(x)|>\lambda
\big\}\big|\leq {C\over \lambda}\|f\|_{L^1(M)}.
\end{eqnarray}

Fix $f\in L^1(M)$. Similarly as in Section \ref{sec3} we set $f_1(x)=f(x)\chi_{{\R}^m\backslash
 {K}}(x)$, $f_2(x)=f(x)\chi_{{\R}^n\backslash
{K}}(x)$ and $f_3(x)=f(x)\chi_{ K }(x)$, where $ K $
is the center of $M$. To prove (\ref{weak 1-1}), it suffices
to verify that the following three estimates hold:
\begin{eqnarray}\label{w 1}
\big|\big\{ x\in {\R}^m\backslash K :
\sup\limits_{t>0} |\exp(-t\Delta)f(x)|>\lambda \big\}\big|\leq
{C\over \lambda}\|f\|_{L^1(M)};
\end{eqnarray}
\begin{eqnarray}\label{w 2}
\big|\big\{ x\in {\R}^n\backslash K :
\sup\limits_{t>0} |\exp(-t\Delta)f(x)|>\lambda \big\}\big|\leq
{C\over \lambda}\|f\|_{L^1(M)};
\end{eqnarray}
\begin{eqnarray}\label{w 3}
\big|\big\{ x\in  K : \sup\limits_{t>0}
|\exp(-t\Delta)f(x)|>\lambda \big\}\big|\leq {C\over
\lambda}\|f\|_{L^1(M)}.
\end{eqnarray}

We first consider (\ref{w 1}). Since $\M_\Delta$ is a sublinear operator, we
have
\begin{eqnarray*}
&&\big|\big\{ x\in {\R}^m\backslash K :
\sup\limits_{t>0} |\exp(-t\Delta)f(x)|>\lambda \big\}\big| \\
&&\leq \big|\big\{ x\in {\R}^m\backslash K :
\sup\limits_{t>0} |\exp(-t\Delta)f_1(x)|>\lambda
\big\}\big| \\
&&\hskip.5cm +\big|\big\{ x\in {\R}^m\backslash K :
\sup\limits_{t>0} |\exp(-t\Delta)f_2(x)|>\lambda
\big\}\big|\\
&&\hskip.5cm+\big|\big\{ x\in {\R}^m\backslash K :
\sup\limits_{t>0} |\exp(-t\Delta)f_3(x)|>\lambda \big\}\big|\\
&&=: I_1+I_2+I_3.
\end{eqnarray*}
To estimate  $I_1$ we consider two cases.

{\bf Case 1}: $t>1$. By Theorem A Point 6
 \begin{align*}
|\exp(-t\Delta)f_1(x)| &\leq C\int_{{\R}^m\backslash K }
\Big( {1\over
t^{n\over2}|x|^{m-2}|y|^{m-2}}\exp(-{c(|x|^2+|y|^2)\over t}) \\
&\qquad\qquad\qquad +
{1\over t^{m\over2}}\exp(-{cd(x,y)^2\over t})\Big) |f(y)|dy\\
&=:  I_{11}+I_{12}.
\end{align*}

To estimate  $I_{11}$ we note that
\begin{eqnarray*}
{t^{-n/2}\over |x|^{m-2}|y|^{m-2}}\exp(-{c(|x|^2+|y|^2)\over
t})\leq C{t^{-n/2}\over |x|^{m-2}|y|^{m-2}}{t^{n\over2}\over
(t+|x|^2+|y|^2)^{n\over2}}\\ \le {1\over |x|^{m-2+n}}\leq {1\over |x|^{m}}
\end{eqnarray*}
since $|y|\geq1$ and $n>2$. Hence,
\begin{eqnarray*}
I_{11}\leq C\int_{{\R}^m\backslash K } {1\over |x|^{m-2+n}} f(y)dy \leq C{\|f\|_{L^1(M)}\over |x|^m}. \\
\end{eqnarray*}

To estimate  $I_{12}$ we note that if $x\in {\R}^m\backslash K  $
then
\begin{eqnarray*}
\int_{{\R}^m\backslash K }{1\over t^{m\over2}}\exp(-{cd(x,y)^2\over t}) |f(y)|dy
\le C\mathcal{M}_{{\R}^m\backslash K }(f)(x)
\end{eqnarray*}
where $\mathcal{M}_{{\R}^m\backslash K }(f)(x)$ is the Hardy-Littlewood maximal
function acting on ${\R}^m\backslash K $.
\medskip

{\bf Case 2}: $t\leq1$. By Theorem A Point 1
\begin{eqnarray*}
 |\exp(-t\Delta)f_1(x)|\leq \int_{{\R}^m\backslash K }
{1\over t^{m\over2}}\exp(-{cd(x,y)^2\over t})   |f(y)|dy.
\end{eqnarray*}
Again   the right-hand side of above estimate is bounded by $\mathcal{M}_{{\R}^m\backslash K }(f)(x)$.
These estimates prove weak type $(1,1)$ for $I_1$ since ${\R}^m\backslash K $
satisfies doubling condition.

Next we show weak type estimates for $I_2$. We also consider two cases.
\medskip

{\bf Case 1}: $t>1$. By Theorem A Point 5
\begin{align*}
 |\exp(-t\Delta)f_2(x)|&\leq C\int_{{\R}^n\backslash K }
\Big( {1\over t^{n\over2}|x|^{m-2}} +
{1\over t^{m\over2} |y|^{n-2}}\Big)\\
&\qquad\qquad\qquad\qquad\exp(-{cd(x,y)^2\over t}) |f(y)|dy\\
&=: I_{21}+I_{22}.
\end{align*}

Similarly as in the  estimate for $I_{11}$ we get
\begin{eqnarray*}
I_{21}&\leq&  C\int_{{\R}^n\backslash K } {1\over
t^{n\over2}|x|^{m-2}} {t^{n\over2}\over (t+d(x,y)^2)^{n\over2} }
|f(y)|dy\\
&\leq& C\int_{{\R}^n\backslash K } {1\over
|x|^{m-2+n}}
 |f(y)|dy\leq C {\|f\|_{1}\over |x|^m},
\end{eqnarray*}
since $n>2$, $|x|\geq1$ and in this case, $d(x,y)\geq |x|$.

To estimate  $I_{22}$ we note that
\begin{eqnarray*}
I_{22}&\leq&  C\int_{{\R}^n\backslash K } {1\over
t^{m\over2}|y|^{n-2}} {t^{m}\over (t+d(x,y)^2)^{m} }
|f(y)|dy\\
&\leq& C\int_{{\R}^n\backslash K } {t^{m\over2}\over
(t+d(x,y)^2)^{m} }
 |f(y)|dy\\
&\leq& C\int_{{\R}^n\backslash K } {\sqrt{t}^{m}\over
(\sqrt{t}+d(x,y))^{2m} }
 |f(y)|dy
\end{eqnarray*}
since $|y|\geq1$. By decomposing the Poisson kernel
${\displaystyle {\sqrt{t}^{m}\over (\sqrt{t}+d(x,y))^{2m} } }$ into annuli, it is
easy to see that the last term of the above inequality is bounded by
$C\mathcal{M}_{{\R}^n\backslash K}(f)(x)$.

\medskip

{\bf Case 2}: $t\leq1$.
Again by Theorem A Point 1
\begin{eqnarray*}
 |\exp(-t\Delta)f_2(x)|\leq C\int_{{\R}^n\backslash K }
{1\over t^{m\over2}}\exp(-{cd(x,y)^2\over t})   |f(y)|dy.
\end{eqnarray*}
Hence it is bounded by $C\mathcal{M}(f)(x)$.

Similar to $I_1$, we have
\begin{eqnarray*}
I_2\leq C {\|f\|_{1}\over \lambda}.
\end{eqnarray*}

Now we consider  $I_3$.

{\bf Case 1}: $t>1$.
By Theorem A Point 3
\begin{eqnarray*}
 |\exp(-t\Delta)f_3(x)|&\leq& C\int_{ K }
\Big( {1\over t^{n\over2}|x|^{m-2}} +
{1\over t^{m\over2} }\Big)\exp(-{cd(x,y)^2\over t}) |f(y)|dy\\
&=:& I_{31}+I_{32}.
\end{eqnarray*}
To estimate  $I_{31}$ we note that
\begin{eqnarray*}
I_{31}&\leq&  C\int_{ K }
{1\over t^{n\over2}|x|^{m-2}}
{t^{n\over2}\over (t+d(x,y)^2)^{n\over2}} |f(y)|dy\leq C{\|f\|_{1}\over |x|^{m+n-2}}\\
&\leq & C{\|f\|_{1}\over |x|^m},
\end{eqnarray*}
where we use the facts that $n>2$, $|x|>1$ and that in this case, $d(x,y)\approx |x|$. Similarly,
\begin{eqnarray*}
I_{32}&\leq&  C\int_{ K }
{1\over t^{m\over2}}
{t^{m\over2}\over (t+d(x,y)^2)^{m\over2}} |f(y)|dy\leq C{\|f\|_{1}\over |x|^{m}}.
\end{eqnarray*}

{\bf Case 2}: $t\leq1$.
By Theorem A Point 1
\begin{eqnarray*}
 |\exp(-t\Delta)f_3(x)|\leq \int_{ K }
{1\over t^{m\over2}}\exp(-{cd(x,y)^2\over t})   |f(y)|dy.
\end{eqnarray*}
Hence it is bounded by $C\mathcal{M}(f)(x)$.

Combining the estimates of the two cases, we obtain
\begin{eqnarray*}
I_3\leq C {\|f\|_{L^1(M)}\over \lambda}.
\end{eqnarray*}

The estimates of $I_1$, $I_2$ and $I_3$ together imply (\ref{w 1}).

\medskip

We now turn to the estimate of (\ref{w 2}). Similarly to the proof of $(\ref{w 1})$, we have
\begin{eqnarray*}
&&\big|\big\{ x\in {\R}^n\backslash K :
\sup\limits_{t>0} |\exp(-t\Delta)f(x)|>\lambda \big\}\big|\\
&&\leq \big|\big\{ x\in {\R}^n\backslash K :
\sup\limits_{t>0} |\exp(-t\Delta)f_1(x)|>\lambda
\big\}\big|\\
&&\hskip.5cm+\big|\big\{ x\in {\R}^n\backslash K :
\sup\limits_{t>0} |\exp(-t\Delta)f_2(x)|>\lambda
\big\}\big|\\
&&\hskip.5cm+\big|\big\{ x\in {\R}^n\backslash K :
\sup\limits_{t>0} |\exp(-t\Delta)f_3(x)|>\lambda \big\}\big|\\
&&=: II_1+II_2+II_3.
\end{eqnarray*}

We note that the estimate of $II_1$ is similar to that of $I_2$, while the estimate of $II_2$ is similar to that of $I_1$.
Moreover, the estimate of $II_3$ is similar to that of $I_3$. Therefore we can verify that (\ref{w 2}) holds.

Finally, we turn to the estimate of (\ref{w 3}). We have
\begin{eqnarray*}
&&\big|\big\{ x\in  K :
\sup\limits_{t>0} |\exp(-t\Delta)f(x)|>\lambda \big\}\big|\\
&&\leq \big|\big\{ x\in  K :
\sup\limits_{t>0} |\exp(-t\Delta)f_1(x)|>\lambda
\big\}\big|\\
&&\hskip.5cm+\big|\big\{ x\in  K :
\sup\limits_{t>0} |\exp(-t\Delta)f_2(x)|>\lambda
\big\}\big|\\
&&\hskip.5cm+\big|\big\{ x\in  K :
\sup\limits_{t>0} |\exp(-t\Delta)f_3(x)|>\lambda \big\}\big|\\
&&=: III_1+III_2+III_3.
\end{eqnarray*}

Also, we point out that the estimate of $III_1$ is similar to that of $I_3$ and that the estimate of $III_2$ is similar to that of $II_3$.

Concerning the term $III_3$, we first note that in this case $x\in  K $.
We have
\begin{eqnarray*}
 |\exp(-t\Delta)f_3(x)|\leq C\int_{ K } {1\over t^{m\over2}}\exp({-{cd(x,y)^2\over t}})|f(y)|dy.
\end{eqnarray*}
It is easy to see that the right-hand side of the above inequality is bounded by $C\mathcal{M}(f)(x)$. Thus, we have
\begin{eqnarray*}
III_3\leq C {\|f\|_{1}\over \lambda}.
\end{eqnarray*}

Hence, we can see that (\ref{w 3}) holds. Now (\ref{w 1}), (\ref{w 2}) and (\ref{w 3}) together imply that (\ref{weak 1-1}) holds, i.e., $\M_\Delta$ is of weak type (1,1).

Next, note that the semigroup $\exp(-t\Delta)$ is submarkovian   so $\M_\Delta$ is bounded on $L^\infty(M)$. This together with (\ref{weak 1-1}), implies that $\M_\Delta$ is bounded on $L^p(M)$ for all $1<p<\infty$.

The proof of Theorem \ref{theorem2} is complete.


\end{proof}


\bibliographystyle{amsplain}

\end{document}